\newtheorem{thm}{Theorem}[section]
\newtheorem{lem}{Lemma}[section]
\theoremstyle{remark}
\newtheorem{remark}{Remark}[section]
\begin{document}
\title{Upper bounds for the minimal number of singular fibers in a Lefschetz fibration over the torus}
\author{Noriyuki Hamada}
\date{}
\maketitle

\begin{abstract}
In this paper, we give some relations in the mapping class groups of oriented closed surfaces 
in the form that a product of a small number of right hand Dehn twists is equal to a single commutator.
Consequently, we find upper bounds for the minimal number of singular fibers in a Lefschetz fibration over the torus.
\end{abstract}

\section{Introduction} \label{sec:intro}

Lefschetz fibrations were originally introduced for studying topological
properties of smooth complex projective varieties, and afterwards
generalized to differentiable category. Furthermore Donaldson and Gompf
revealed the close relationship between Lefschetz fibrations and
$4$-dimensional symplectic topology in the late 1990s, and since then
they have been extensively studied.

The information about the number of singular fibers in a Lefschetz fibration provides us 
important information about 
the topological invariants of its total space such as the Euler number, the signature, the Chern numbers, and so on.
In addition, it has been known that the number of singular fibers in a Lefschetz fibration cannot be arbitrary, 
so it makes sense to ask what the minimal number of singular fibers in a Lefschetz fibration is.
We denote by $N(g,h)$ the minimal number of singular fibers 
in a non-trivial relatively minimal genus $g$ Lefschetz fibration
over the oriented closed surface of genus $h$.
This minimal number has been studied by various authors.
Table~\ref{tbl:N(g,h)} shows previous studies about $N(g,h)$.
\begin{table}[t]
\begin{center}
\begin{tabular}{c||c|c|c|c|c|c|c} 
\rule[-6pt]{0pt}{18pt} $7$ & $6 \leq N \leq 24$ & $2 \leq N \leq 24$ & $1$ & $1$ & $1$ & $1$ & $1$ \\ \hline
\rule[-6pt]{0pt}{18pt} $6$ & $6 \leq N \leq 16$ & $2 \leq N \leq 16$ & $1$ & $1$ & $1$ & $1$ & $1$ \\ \hline
\rule[-6pt]{0pt}{18pt} $5$ & $5 \leq N \leq 20$ & $2 \leq N \leq 20$ & $1$ & $1$ & $1$ & $1$ & $1$ \\ \hline
\rule[-6pt]{0pt}{18pt} $4$ & $4 \leq N \leq 12$ & $2 \leq N \leq 12$ & $1$ & $1$ & $1$ & $1$ & $1$ \\ \hline
\rule[-6pt]{0pt}{18pt} $3$ & $3 \leq N \leq 16$ & $2 \leq N \leq 16$ & $1$ & $1$ & $1$ & $1$ & $1$ \\ \hline
\rule[-6pt]{0pt}{18pt} $2$ & $7$ & $6$ or $7$  & $5$ or $6$ & $5$ & $5$ & $5$ & $5$ \\ \hline
\rule[-6pt]{0pt}{18pt} $1$ & $12$ &  $12$ & $12$ & $12$ & $12$ & $12$ & $12$ \\ \hline \hline
\slashbox[8pt]{$g$}{$h$} & $0$ & $1$  & $2$ & $3$ & $4$ & $5$ & $6$
\end{tabular}
\end{center}
  \caption{Previous results for $N = N(g,h)$}
  \label{tbl:N(g,h)}
\end{table}
Korkmaz and Ozbagci proved that $(1)$ $N(g,h)=1$ if and only if $g \geq 3$ and $h \geq 2$, $(2)$ $N(1,h) = 12$ for all $h \geq 0$, and 
$(3)$ $5 \leq N(2,h) \leq 8$ for all $h \geq 0$ (\cite{KO1}).
The upper bound for $N(2,h)$ in $(3)$ follows from the existence of a genus $2$ Lefschetz fibration over the sphere 
with eight singular fibers, which was constructed by Matsumoto (\cite{Matsumoto}).
In addition, for $g = 2$ Monden showed that $(1)$ $N(2,h) = 5$ for all $h \geq 3$, $(2)$ $N(2,2) \leq 6$, and 
$(3)$ $6 \leq N(2,1) \leq 7$ (\cite{Mon}).
Ozbagci proved that the number of singular fibers in a genus $2$ Lefschetz fibration over the sphere
cannot be equal to $5$ or $6$ (\cite{Oz}) and 
Xiao constructed a genus $2$ Lefschetz fibration over the sphere with seven singular fibers (\cite{Xiao}); hence, 
$N(2,0) = 7$.
For $h=0$ some estimates for $N(g,0)$ are known. 
Cadavid and Korkmaz independently generalized Matsumoto's genus $2$ Lefschetz fibration as above 
to genus $g$ Lefschetz fibrations over the sphere with $2g+10$ singular fibers
for $g$ odd, or with $2g+4$ singular fibers for $g$ even (\cite{C,K2}).
This fact shows $N(g,0) \leq 2g+10$ for $g$ odd and $N(g,0) \leq 2g+4$ for $g$ even.
Stipsicz (\cite{Sti}) proved that for any $g \geq 2$, the number of irreducible singular fibers 
in a genus $g$ Lefschetz fibration over the sphere is bounded below by $(4g+2)/5$.
Therefore, we have
$$
\frac{1}{5}(4g+2) \leq N(g,0).
$$
In particular, there is no universal upper bound for $N(g,0)$ which is independent of $g$.

In the case of $h=1$, it has been known that $2 \leq N(g,1) \leq N(g,0)$.
The former inequality follows by \cite{KO1}.
The latter inequality comes from the following observation.
If a genus $g$ Lefschetz fibration over the sphere is given, then by taking the fiber sum of it and $\Sigma_g \times \Sigma_1$ 
the trivial $\Sigma_g$-bundle over the torus, we can construct 
a genus $g$ Lefschetz fibration over the torus without changing the number of the singular fibers.
However, non-trivial upper bounds for $N(g,1)$ have not been given explicitly 
as far as the author knows.
The present paper aims at giving new upper bounds for $N(g,1)$, 
i.e.\ in the case of Lefschetz fibrations over the torus.
The main theorem of this paper is the following.

\begin{thm} \label{mainthm} 
For the minimal number of singular fibers in a Lefschetz fibration over the torus, the following holds:
\begin{flushleft}
\textup{(1)} \quad $N(g,1) \leq 6 \qquad \text{ for all } g \geq 3.$ \\
\textup{(2)} \quad $N(g,1) \leq 5 \qquad \text{ for all } g \geq 7.$ 
\end{flushleft}
In particular, there is a universal upper bound for $N(g,1)$ which does not depend on $g$.
\end{thm}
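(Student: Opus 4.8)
The plan is to translate the problem into the mapping class group, reduce it to the construction of two short ``positive word $=$ single commutator'' relations, and then build those relations by hand with Dehn-twist calculus. Write $\Gamma_g$ for the mapping class group of the closed oriented genus $g$ surface $\Sigma_g$ and $t_c$ for the right-handed Dehn twist about a simple closed curve $c$. By the standard dictionary between Lefschetz fibrations and factorizations in mapping class groups (Kas, Gompf--Stipsicz), a relatively minimal, non-trivial genus $g$ Lefschetz fibration over the torus with $n$ singular fibers is the same datum as a relation
\[
t_{c_1}t_{c_2}\cdots t_{c_n}=[\alpha,\beta]\qquad\text{in }\Gamma_g,
\]
with $c_1,\dots,c_n$ essential simple closed curves (no $c_i$ bounding a disk) and $\alpha,\beta\in\Gamma_g$; if the $c_i$ are all nonseparating this is automatic, and then $n\ge 1$ already forces non-triviality. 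Thus $N(g,1)\le n$ as soon as one such relation with $n$ nonseparating vanishing cycles is produced, and Theorem~\ref{mainthm} reduces to the two assertions:
\begin{flushleft}
\textup{(A)} for every $g\ge 3$ there are nonseparating simple closed curves $c_1,\dots,c_6\subset\Sigma_g$ and $\alpha,\beta\in\Gamma_g$ with $t_{c_1}t_{c_2}t_{c_3}t_{c_4}t_{c_5}t_{c_6}=[\alpha,\beta]$;\\
\textup{(B)} for every $g\ge 7$ there are nonseparating simple closed curves $c_1,\dots,c_5\subset\Sigma_g$ and $\alpha',\beta'\in\Gamma_g$ with $t_{c_1}t_{c_2}t_{c_3}t_{c_4}t_{c_5}=[\alpha',\beta']$.
\end{flushleft}
These are exactly the relations advertised in the abstract, and the theorem is immediate from them.

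For (A) and (B) I would argue as follows. Rewrite the target relation as $t_{c_1}\cdots t_{c_n}\,\beta=\alpha\,\beta\,\alpha^{-1}$, so the problem becomes: find a short positive word $P=t_{c_1}\cdots t_{c_n}$ and an element $\beta$ such that $P\beta$ is conjugate to $\beta$, and then take $\alpha$ to be a conjugator. There is no obvious obstruction here (the groups are perfect for $g\ge 3$), so the real content is keeping the positive word as short as five or six twists. I would look for a highly symmetric configuration inside a standard picture of $\Sigma_g$: choose $\beta$ from a handle-exchanging or rotational mapping class, let the prospective vanishing cycles be the images of one or two basic curves under a few powers of $\beta$ and a few standard twists, and then collapse $\alpha\beta\alpha^{-1}\beta^{-1}$ to the positive word using the conjugation rule $\varphi t_c\varphi^{-1}=t_{\varphi(c)}$, the braid relation $t_at_bt_a=t_bt_at_b$, the chain relations, the lantern relation, and Matsumoto/hyperelliptic-type relations. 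I expect the six-twist relation in genus $3$ to be found first and the five-twist relation in genus $7$ to arise from the six-twist one by trading a block of twists via a lantern embedded in the extra genus; the remaining genera in each range are then reached either by writing down a $g$-parametrized family of curves for which the verification is uniform, or by a stabilization step that raises the fiber genus by one while keeping the number of positive twists fixed, absorbing the new handle into the pair $(\alpha,\beta)$ after a Hurwitz move.

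The step I expect to be the main obstacle is precisely this last point: combining brevity of the positive word with uniformity in $g$. Exhibiting one short relation is a finite, checkable task; promoting it to a family valid for all $g\ge 3$ (resp.\ all $g\ge 7$) is where care is needed. Two subtleties stand out: (i) verifying that the relation genuinely holds in $\Gamma_g$, not merely on homology or inside a proper subgroup --- best arranged by proving it first in a subsurface with one boundary component, where the boundary twist can be tracked, and then capping off; and (ii) checking that every vanishing cycle stays nonseparating along the family or through the stabilization, so that the fibrations remain relatively minimal and non-trivial for all $g$ in range. The reason the bound drops from six to five only past $g=7$ is, I expect, that the lantern trade requires a four-holed sphere in the fiber disjoint from the remaining vanishing cycles, which only fits once the fiber genus is large enough.
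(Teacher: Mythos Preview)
Your reduction is correct: producing, for each $g$ in the stated range, a relation in $\mathcal{M}_g$ of the form ``product of $n$ right Dehn twists along essential curves equals a single commutator'' immediately gives $N(g,1)\le n$. But from that point on the proposal is only a plan, and the mechanisms you guess at are not the ones that actually work in the paper, so as written there is a genuine gap.

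Two concrete points. First, you never construct the relations; you describe a search strategy (symmetric configurations, then a lantern trade to drop from six to five, then a stabilization argument for uniformity in $g$). None of these steps is carried out, and your diagnosis of why the threshold is $g\ge 7$ (a disjoint four-holed sphere for a lantern) is not the reason. In the paper the five-twist relation does \emph{not} come from the six-twist one by a lantern substitution; it comes from a completely separate source, the $7$-holed torus relation of Korkmaz--Ozbagci in $\mathcal{M}_1^7$, and the threshold $g\ge 7$ is simply what is needed to embed $\Sigma_1^7$ into $\Sigma_g$ so that the seven boundary curves become nonseparating. Likewise the six-twist relation comes from Matsumoto's relation $\delta_1\delta_2=(B_0B_1B_2C)^2$ in $\mathcal{M}_2^2$, and $g\ge 3$ is what is needed to embed $\Sigma_2^2$ into $\Sigma_g$.

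Second, and more importantly, you are missing the key algebraic trick that turns such a boundary relation into a single commutator. The paper's idea is this: start from a relation $\delta_1\cdots\delta_k=W$ in $\mathcal{M}(\Sigma)$ with $\Sigma$ a bounded surface, embed $\Sigma\hookrightarrow\Sigma_g$ so the $\delta_i$ become disjoint nonseparating curves, and then move roughly half of the positive twists in $W$ to the left side. After commuting disjoint twists, the left side becomes an alternating product $(\overline{a}_1 b_1)(\overline{a}_2 b_2)\cdots(\overline{a}_m b_m)$ where $\{a_i\}$ and $\{b_i\}$ are two disjoint collections of simple closed curves each of whose complement in $\Sigma_g$ is connected. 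By the change-of-coordinates principle there is $\phi\in\mathcal{M}_g$ with $\phi(b_i)=a_{\sigma(i)}$ (for a suitable bijection), and then $t_{\phi(c)}=\phi t_c\phi^{-1}$ collapses the whole left side to a commutator $[\,\overline{a}_1 b_1\cdots \overline{a}_m b_m,\ \phi\,]$. The right side is the remaining short positive word (six twists from Matsumoto, five from the $7$-holed torus relation). Uniformity in $g$ is automatic because the embedding $\Sigma\hookrightarrow\Sigma_g$ leaves room for arbitrarily many extra handles; no separate stabilization argument is needed. Your formulation ``find $\beta$ with $P\beta$ conjugate to $\beta$'' is equivalent in principle but gives no hint of how to find such a $\beta$; the cut-system/change-of-coordinates argument is the missing idea.
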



We will prove Theorem~\ref{mainthm} by concretely constructing 
a genus $g$ Lefschetz fibration over the torus
with six or five singular fibers for arbitrary $g \geq 3$ or $g \geq 7$, respectively.
This will be done by providing new relations in the mapping class group of the surface of genus $g$, 
which are in the form that a product of five or six right hand Dehn twists is equal to a single commutator.

The contents of this paper are as follows.
Section $\ref{sec:basis}$ consists of the fundamental concepts on Lefschetz fibrations.
In particular, we will illustrate the fact that 
Lefschetz fibrations can be obtained from a certain type of relations 
in the mapping class groups of fiber surfaces.
In Section $\ref{sec:relations}$ we will introduce Matsumoto's relation and 
the $7$-holed torus relation
in the mapping class groups of holed surfaces, which will be used in the proof of Theorem~\ref{mainthm}.
Finally, in Section $\ref{sec:proof}$ we will prove Theorem~\ref{mainthm}.
We will regard Matsumoto's relation and the $7$-holed torus relation 
as relations in the mapping class groups of other closed surfaces 
by embedding the original holed surfaces into the closed surfaces.
Then we will transform the relations to new ones, which will prove Theorem~\ref{mainthm}.\\

\noindent\textbf{Acknowledgement.}
The author would like to thank Hisaaki Endo, Kenta Hayano, Susumu Hirose, Yukio Matsumoto,
Naoyuki Monden and Yoshihisa Sato
for helpful comments and invaluable advice on Lefschetz fibrations or on the surface mapping class groups.
In particular, the author would like to thank Naoyuki Monden for several discussions on $N(g,h)$.
Finally, the author would like to express his deepest gratitude to his supervisor Osamu Saeki 
for his constant encouragement and many useful discussions.

\section{Lefschetz fibrations} \label{sec:basis}
We recall some basic definitions and facts about Lefschetz fibrations (for details, see~\cite{GS}).
Let $X^4$ be a connected, oriented and closed smooth 4-dimensional manifold, and $\Sigma_g$ be the connected, oriented and closed smooth 2-dimensional manifold with genus $g$.
A smooth map $f : X^4 \rightarrow \Sigma_h$ is called a \textit{Lefschetz fibration over $\Sigma_h$} 
if $f$ has only finitely many critical points $p_1, p_2, \dots, p_n$, 
around each of which $f$ is expressed as $(z_1, z_2) \mapsto z_1^2 + z_2^2$ by local complex coordinates compatible with the orientations of the manifolds.
We assume the critical values $b_i = f(p_i)$ are distinct.
The inverse image of a regular value (or a critical value) is called a \textit{regular fiber} (resp.\ a \textit{singular fiber}).
We will also assume that the regular fibers are connected.
Since $f$ is a submersion on the complement of singular fibers, 
the restriction $f : X^4 \setminus ( f^{-1}(b_1) \cup f^{-1}(b_2) \cup \dots \cup f^{-1}(b_n) ) \rightarrow \Sigma_h \setminus \{ b_1, b_2, \dots, b_n \}$ 
is a $\Sigma_g$-bundle over $\Sigma_h \setminus \{ b_1, b_2, \dots, b_n \}$ for some $\Sigma_g$.
The $genus$ of the Lefschetz fibration is defined to be the genus of a regular fiber.
Furthermore, in this paper, we assume the relative minimality and the non-triviality.
A Lefschetz fibration is said to be \textit{relatively minimal} if there is no fiber 
which contains a $(-1)$-sphere (embedded sphere with self-intersection $-1$), 
and \textit{non-trivial} if it has at least one singular fiber.
The singular fiber $f^{-1}(b_i)$ is obtained by ``crushing'' a simple closed curve $c_i$, called the \textit{vanishing cycle}, on a nearby regular fiber to a point.
If the vanishing cycle is separating (or non-separating), then the corresponding singular fiber is said to be \textit{reducible} (resp.\ \textit{irreducible}).
Two Lefschetz fibrations $f:X \rightarrow \Sigma_h$ and $f' : X' \rightarrow \Sigma_h$ are said to be \textit{isomorphic}
if there are orientation-preserving diffeomorphisms 
$\Psi : X \rightarrow X'$ and $\psi : \Sigma_h \rightarrow \Sigma_h$ 
such that $f' \circ \Psi = \psi \circ f$.

There is a good relation between Lefschetz fibrations and the surface mapping class groups (for details, see~\cite{Matsumoto}).
We fix a regular value $b_0 \in \Sigma_h$ and an identification $\iota$ between the regular fiber $f^{-1}(b_0)$ and the model surface $\Sigma_g$.
Let $\gamma$ be a smooth loop in $\Sigma_h$ based at $b_0$. 
Then the pull-back bundle $\gamma^*(f)$ is described as $\Sigma_g \times [0,1]$ 
with $\Sigma_g \times 0$ and $\Sigma_g \times 1$ being identified via an orientation-preserving diffeomorphism 
$\phi$ from $\Sigma_g$ to itself: 
$f^{-1}(\gamma) \cong \Sigma_g \times [0,1] \hspace{2pt}\slash\hspace{2pt} (x,1) \sim (\phi(x),0)$.
Let $\mathcal{M}_g$ be the mapping class group of genus $g$ which consists of 
all isotopy classes of orientation-preserving diffeomorphisms of $\Sigma_g$.
Then the map $ \Phi : \pi_1 ( \Sigma_g \setminus \{ b_1, b_2, \dots , b_n \} , b_0) \rightarrow \mathcal{M}_g$ 
which maps $\gamma$ to $\phi$ is well-defined, and becomes an anti-homomorphism.
Here we do not distinguish a curve or a diffeomorphism from its homotopy or isotopy class, respectively, and we will use this convention throughout this paper.
We call the map $\Phi$ the \textit{monodromy representation} of the Lefschetz fibration $f$.
Let $\gamma_i$ be a loop on $\Sigma_g \setminus \{ b_1, b_2, \dots, b_n \} $ 
based at $b_0$ which surrounds exclusively $b_i$ with the orientation as depicted in Figure~\ref{fig:gammaionsigma_h}.
\begin{figure}[t]
 \begin{center}
  \includegraphics[keepaspectratio, width=98pt, clip]{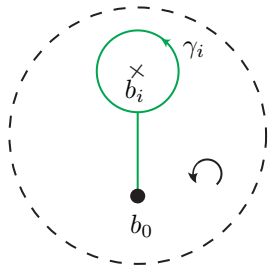}
    \put(-52,12){$b_0$}
    \put(-54,63){$b_i$}
    \put(-32,80){$\gamma_i$} 
  \caption{The oriented loop $\gamma_i$ on $\Sigma_h$}
  \label{fig:gammaionsigma_h}
 \end{center}
\end{figure}
The monodromy representation $\Phi$ maps $\gamma_i$ to the right hand Dehn twist $t_{c_i}$ along the corresponding vanishing cycle $c_i$:
$\Phi(\gamma_i) = t_{c_i}$ (see Figure~\ref{fig:righthandDehntwist}).
\begin{figure}[t]
 \begin{center}
  \includegraphics[keepaspectratio, width=230pt, clip]{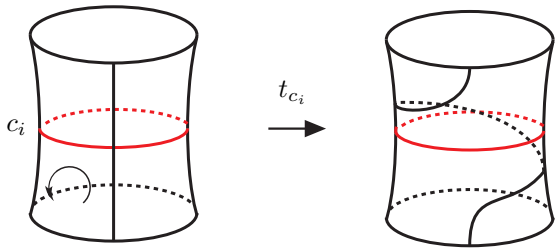}
    \put(-220,57){$c_i$} 
    \put(-117,70){$t_{c_i}$}
  \caption{Right hand Dehn twist $t_{c_i}$ along the simple closed curve $c_i$}
  \label{fig:righthandDehntwist}
 \end{center}
\end{figure}
If we change the identification $\iota : f^{-1}(b_0) \rightarrow \Sigma_g$ to another one, 
then the monodromy representation $\Phi$ changes to $\rho \Phi \rho^{-1}$ for some $\rho \in \mathcal{M}_g$.
A Lefschetz fibration determines the monodromy representation up to such a conjugation.
Conversely, if an anti-homomorphism $\Phi : \pi_1 (\Sigma_h \setminus \{b_1, b_2, \dots, b_n \}, b_0 ) \to \mathcal{M}_g $ 
which maps each $\gamma_i$ to a right hand Dehn twist is given, 
then one can construct a relatively minimal genus $g$ Lefschetz fibration over $\Sigma_h$ with its monodromy representation $\Phi$. 
Moreover, if $g \geq 2$, then such a Lefschetz fibration is determined uniquely up to an isomorphism. 

Furthermore, Lefschetz fibrations correspond to a certain type of relations in the mapping class groups.
The fundamental group $\pi_1(\Sigma_h \setminus \{b_1, b_2, \dots, b_n \}, b_0 )$ has the finite presentation 
\begin{center}
 $\pi_1 (\Sigma_h \setminus \{b_1, b_2, \dots, b_n \}, b_0 ) = 
\Bigl< \alpha_j,\beta_j,\gamma_i ~ \Bigl| ~ \prod\limits_{j=1}^h \bigl[ \alpha_j, \beta_j \bigl] = \prod\limits_{i=1}^n \gamma_i \Bigl>$,
\end{center}
where $\alpha_j$, $\beta_j$ ($j = 1, 2, \dots, h$) and $\gamma_i$ ($i = 1, 2, \dots, n$) are the loops as indicated in Figure \ref{fig:generatorforpi1}, and $\bigl[ \alpha_j, \beta_j \bigl] = \alpha_j\beta_j\alpha_j^{-1}\beta_j^{-1}$ 
represents the commutator of $\alpha_j$ and $\beta_j$.
\begin{figure}[t]
 \begin{center}
  \includegraphics[keepaspectratio, width=337pt, clip]{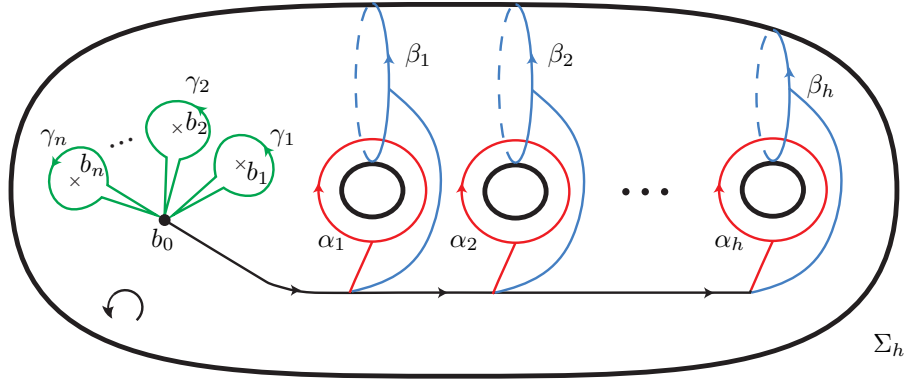}
    \put(-283,50){$b_0$}
    \put(-238,90){$\gamma_1$} 
    \put(-270,110){$\gamma_2$}
    \put(-325,90){$\gamma_n$}
    \put(-247,75){$b_1$} 
    \put(-270,94){$b_2$}
    \put(-310,78){$b_n$}
    \put(-220,50){$\alpha_1$}
    \put(-187,120){$\beta_1$}
    \put(-170,50){$\alpha_2$}
    \put(-133,120){$\beta_2$}
    \put(-70,50){$\alpha_h$}
    \put(-35,108){$\beta_h$}
    \put(-10,10){$\Sigma_h$}
  \caption{The generators of $\pi_1(\Sigma_h \setminus \{b_1, b_2, \dots, b_n \}, b_0 )$}
  \label{fig:generatorforpi1}
 \end{center}
\end{figure}
Thus, a monodromy representation $\Phi$ satisfies $\prod_j [\Phi(\alpha_j),\Phi(\beta_j)] = \prod_i t_{c_i}$
and $t_{c_i} = \Phi(\gamma_i)$ are right hand Dehn twists.
We call this relation the \textit{global monodromy} or the \textit{monodromy factorization} 
of the Lefschetz fibration corresponding to $\Phi$.
Conversely, if there is a relation in the form that ``a product of $n$ right hand Dehn twists is equal to
a product of $h$ commutators in $\mathcal{M}_g$",
$$  \prod\limits_{j=1}^h \bigl[ \phi_j, \psi_j \bigl] = \prod\limits_{i=1}^n t_{c_i}, $$
then we can define the anti-homomorphism 
$\Phi : \pi_1 (\Sigma_h \setminus \{b_1, b_2, \dots, b_n \}, b_0 ) \to \mathcal{M}_g $ 
by setting 
$\Phi(\alpha_j) = \phi_j$, $\Phi(\beta_j) = \psi_j$ and $\Phi(\gamma_i) = t_{c_i}$.
Consequently, we can construct a genus $g$ Lefschetz fibration over $\Sigma_h$ with $n$ singular fibers
such that its vanishing cycles are the simple closed curves ${c_i}$.


\section{Mapping class groups of holed surfaces} \label{sec:relations}
 \begin{figure}[t]
 \begin{center}
  \includegraphics[keepaspectratio, width=203pt, clip]{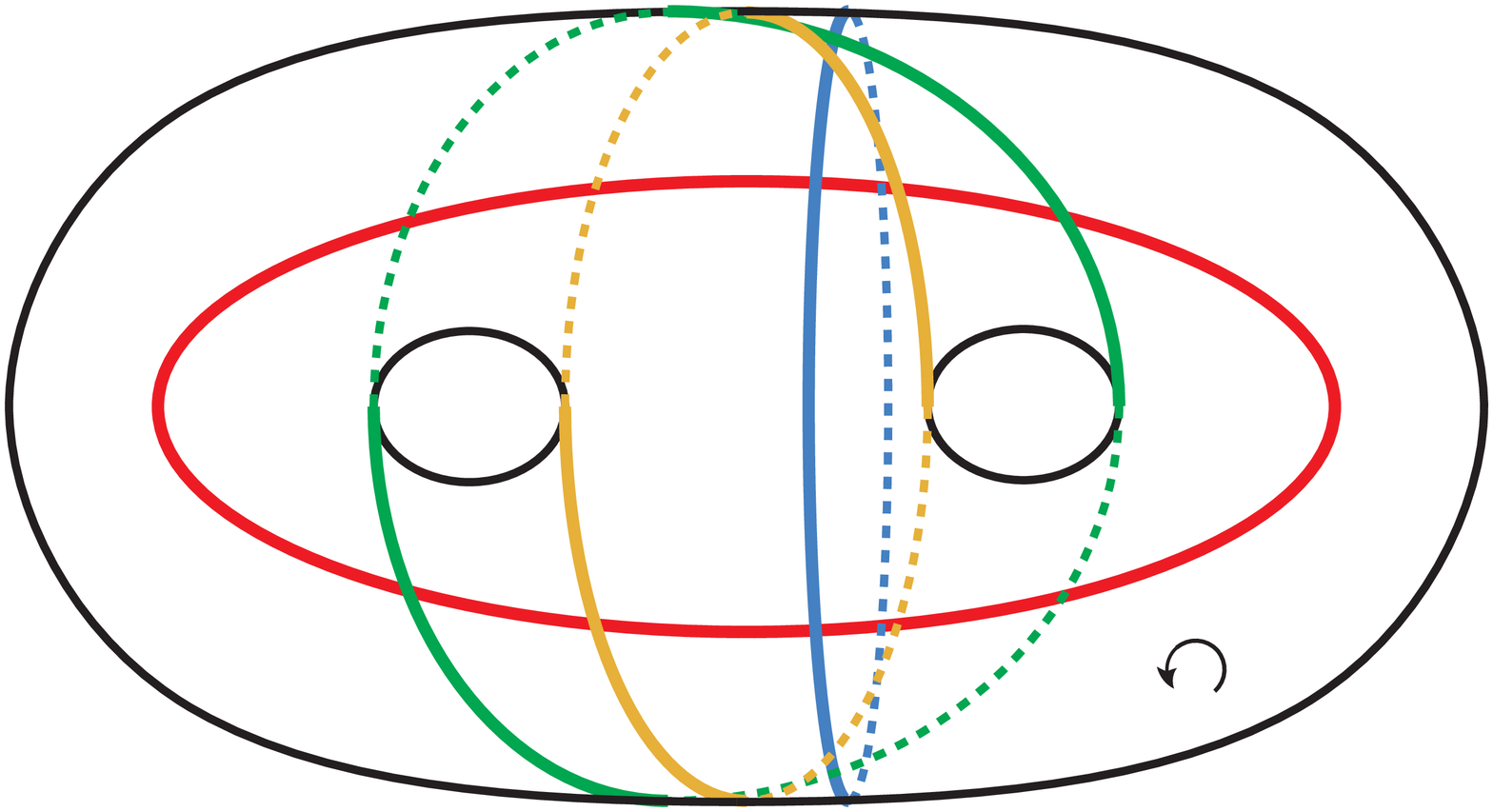}
       \put(-180,75){$B_0$} 
       \put(-150,10){$B_1$} 
       \put(-130,10){$B_2$} 
       \put(-105,50){$C$} 
       \put(-10,10){$\Sigma_2$}
     \caption{The curves for Matsumoto's relation}
     \label{fig:matsumotosrelation}
 \end{center}
\end{figure}
 \begin{figure}[t]
 \begin{center}
     \includegraphics[keepaspectratio, width=203pt,clip]{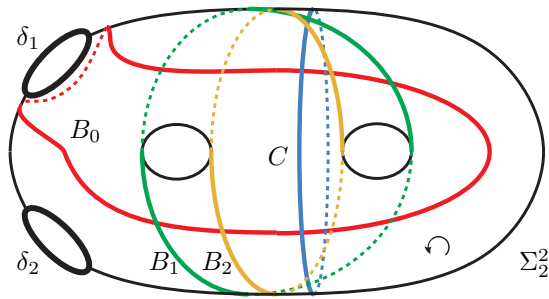}
       \put(-200,95){$\delta_1$} 
       \put(-200,10){$\delta_2$} 
       \put(-180,60){$B_0$} 
       \put(-150,10){$B_1$} 
       \put(-130,10){$B_2$} 
       \put(-105,50){$C$} 
       \put(-10,10){$\Sigma_2^2$}
     \caption{The curves for Matsumoto's relation with boundary}
     \label{fig:matsumotosrelationwithboundaries}
 \end{center}
\end{figure}

We will use the following notations:
\begin{itemize}
  \item[] $\Sigma = \Sigma_g^k$ : the compact oriented surface of genus $g$ with $k$ boundary components, 
  \item[] Diff$^+(\Sigma,\partial \Sigma)$: the group of orientation preserving self-diffeomorphisms of $\Sigma$ 
that are the identity on the boundary, 
  \item[] Diff$^+_0(\Sigma, \partial \Sigma)$: the normal subgroup of Diff$^+(\Sigma,\partial \Sigma)$ consisting of 
	all elements isotopic to the identity relative to the boundary, 
  \item[] $\mathcal{M}(\Sigma) = $Diff$^+(\Sigma,\partial \Sigma) / $Diff$^+_0(\Sigma, \partial \Sigma)$ : 
the mapping class group of $\Sigma$, 
  \item[] $\mathcal{M}_g^k = \mathcal{M}(\Sigma_g^k)$, \quad $\Sigma_g = \Sigma_g^0$, \quad $\mathcal{M}_g = \mathcal{M}_g^0$. 
\end{itemize}
We will use the functional notation for the product of $\mathcal{M}_g^k$, 
namely, for two elements $\phi$ and $\psi$ in $\mathcal{M}_g^k$ 
the product $\psi\phi$ means that we first apply $\phi$ tand hen $\psi$.
In the following, for a simple closed curve $\alpha$, the right hand Dehn twist $t_{\alpha}$ along $\alpha$ will 
often be written as $\alpha$, and the left hand Dehn twist $t_{\alpha}^{-1}$ as $\overline{\alpha}$, for simplicity.

\subsection{Matsumoto's relation}
Matsumoto~\cite{Matsumoto} constructed a genus $2$ Lefschetz fibration over the sphere on 
$(S^2 \times T^2) \sharp 4\overline{\mathbb{C}P_2}$ with eight singular fibers.
The global monodromy of the Lefschetz fibration is $$1 = (B_0 B_1 B_2 C)^2, $$
where the curves are as indicated in Figure \ref{fig:matsumotosrelation}.

In \cite{K} Korkmaz mentioned that the relation $$\delta_1 \delta_2 = (B_0 B_1 B_2 C)^2$$ 
holds in $\mathcal{M}_2^2$,
where $\delta_1$ and $\delta_2$ are as depicted in Figure \ref{fig:matsumotosrelationwithboundaries}, without proof.
For completeness, we give a proof here.
\begin{lem} \label{lem:matsumotosrelation} 
We have 
$\delta_1\delta_2 = (B_0 B_1 B_2 C)^2$
in $\mathcal{M}_2^2$.
\end{lem}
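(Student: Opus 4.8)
The plan is to deduce the boundary version $\delta_1\delta_2 = (B_0B_1B_2C)^2$ in $\mathcal{M}_2^2$ from the known closed relation $1 = (B_0B_1B_2C)^2$ in $\mathcal{M}_2$, by a careful lifting argument through the capping homomorphism. Concretely, let $\Sigma_2$ be obtained from $\Sigma_2^2$ by gluing a disk to each of the two boundary circles $\delta_1$ and $\delta_2$; this induces a surjection $\mathcal{M}_2^2 \to \mathcal{M}_2$ whose kernel is generated by $t_{\delta_1}$ and $t_{\delta_2}$ together with a ``point-pushing'' subgroup. Since Matsumoto's relation holds in the closed group, the product $(B_0B_1B_2C)^2$, evaluated in $\mathcal{M}_2^2$, must lie in this kernel; the task is to identify exactly which element of the kernel it equals, and to show it is precisely $t_{\delta_1}t_{\delta_2}$ with no point-pushing correction.

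First I would set up explicit representative curves for $B_0, B_1, B_2, C$ on the twice-holed genus-2 surface as in Figure~\ref{fig:matsumotosrelationwithboundaries}, chosen so that they are disjoint from small collar neighborhoods of $\delta_1$ and $\delta_2$; this guarantees that each $t_{B_i}$ and $t_C$ has a canonical lift to $\mathcal{M}_2^2$ supported away from the boundary, so the left side of the putative relation is unambiguously defined. Next I would compute the image of $(B_0B_1B_2C)^2$ under the action on $H_1(\Sigma_2^2;\mathbb{Z})$ — or better, on $\pi_1$ of the surface with a marked point on each boundary — to pin down the point-pushing component; because the four curves $B_0,B_1,B_2,C$ bound a subsurface that can be isotoped into the interior, the composite should act trivially on the relevant based loops and so the point-pushing part vanishes. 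That reduces the identity to $(B_0B_1B_2C)^2 = t_{\delta_1}^{a}t_{\delta_2}^{b}$ for integers $a,b$, and these exponents can be recovered by a boundary-Euler-characteristic or ``chain relation'' bookkeeping: comparing the signed count of Dehn twists — eight right-handed twists against the Euler-characteristic contribution $2(2\cdot 2 - 2) + \text{(boundary terms)}$ — forces $a = b = 1$. Alternatively, and perhaps more transparently, I would exhibit a factorization of $(B_0B_1B_2C)^2$ into chain relations: Matsumoto's curves fit into a standard chain on $\Sigma_2^2$, and the boundary twists $\delta_1, \delta_2$ appear naturally as the boundary components of the regular neighborhood of that chain via the classical $k$-chain relation, so one obtains the lemma by a direct substitution of chain relations rather than by an abstract kernel analysis.

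The main obstacle I anticipate is the point-pushing ambiguity: the capping map $\mathcal{M}_2^2 \to \mathcal{M}_2$ has a kernel strictly larger than $\langle t_{\delta_1}, t_{\delta_2}\rangle$ when one caps a single boundary (the Birman exact sequence contributes $\pi_1$ of the surface), and even capping both boundaries leaves a possible ``mixing'' term, so simply knowing the closed relation does not immediately yield the boundary relation — one genuinely has to verify that the lifted product is boundary-parallel on the nose. I expect this to be handled cleanly by working with the explicit chain-relation decomposition, where the boundary twists are produced constructively and no kernel computation is needed; the routine part will then be checking that the chain relation's output curves are isotopic to $\delta_1$ and $\delta_2$ as drawn, which is a matter of tracing the regular neighborhood of the chain in Figure~\ref{fig:matsumotosrelationwithboundaries}. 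I would present the chain-relation argument as the main line and relegate the homological exponent-counting to a remark confirming consistency.
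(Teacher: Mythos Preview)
Your approach is genuinely different from the paper's: the paper simply applies the Alexander method, choosing five curves $\gamma_1,\dots,\gamma_5$ that fill $\Sigma_2^2$ and verifying picture-by-picture that $t_{\delta_1}t_{\delta_2}$ and $(t_{B_0}t_{B_1}t_{B_2}t_C)^2$ act identically on each $\gamma_i$. There is no lifting argument, no kernel analysis, and no chain relation --- just a direct check.

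Your proposal, however, has two concrete gaps. First, the ``chain relation'' alternative does not apply: the curves $B_0,B_1,B_2,C$ do \emph{not} form a chain on $\Sigma_2^2$. In Matsumoto's configuration $C$ is the separating curve in the middle of the genus-$2$ surface, and the intersection pattern among $B_0,B_1,B_2,C$ is not that of consecutive once-intersecting curves; the classical $k$-chain relation simply does not produce $(B_0B_1B_2C)^2$, so the promised ``direct substitution of chain relations'' is not available. Second, your argument that the point-pushing component vanishes is not valid as stated: you assert that ``the four curves $B_0,B_1,B_2,C$ bound a subsurface that can be isotoped into the interior,'' but in fact these curves \emph{fill} the surface (this is precisely why Matsumoto's word is a positive factorization of the identity on the closed surface), so there is no proper subsurface containing their support away from the boundary. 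Without that, you have no mechanism to rule out a nontrivial image in the Birman kernel of $\mathcal{M}_{2,2}\to\mathcal{M}_2$, and the reduction to $t_{\delta_1}^a t_{\delta_2}^b$ is unjustified.

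Even if you patched the point-pushing step, doing so rigorously amounts to checking that $(B_0B_1B_2C)^2$ fixes the isotopy classes of arcs joining the two boundary components --- which is essentially the Alexander-method computation the paper carries out, only with extra bookkeeping for the exponents $a,b$ afterward. The vague ``Euler-characteristic / boundary bookkeeping'' you propose for those exponents is not an actual invariant computation; you would need something like a first-Chern-class or fractional-Dehn-twist argument, and you have not supplied one. In short, the conceptual route you sketch does not bypass the hands-on verification, and the shortcut you offer (chain relations) is based on a mistaken premise.
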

\begin{proof}
Here we use the symbol $t_{\alpha}$ for the right hand Dehn twist along a simple closed curve $\alpha$
in order to distinguish maps from curves.
We can determine whether two diffeomorphisms of a surface belong to the same mapping class or not 
by looking at their actions on a well-chosen collection of oriented curves which fill up the surface 
and which satisfies some technical assumptions 
(for details, see~\cite{FM} in which this method is called the \textit{Alexander method}).
We choose such a collection of curves $\{\gamma_i\}$ filling up $\Sigma_2^2$ as in Figure \ref{fig:fill-up-curves}.
If the actions of $t_{\delta_1}t_{\delta_2}$ and $(t_{B_0}t_{B_1}t_{B_2}t_C)^2$ on this system coincide, 
then they also coincide as mapping classes.
Indeed, their actions are identical as shown in Figures \ref{fig:gamma1} -- \ref{fig:gamma5}. 
(In the figures the orientations of the surfaces are the same as that indicated in  
Figure~\ref{fig:matsumotosrelationwithboundaries}.) \qedhere
\end{proof}
\begin{figure}[t]
 \begin{center}
  \includegraphics[keepaspectratio, width=220pt, clip]{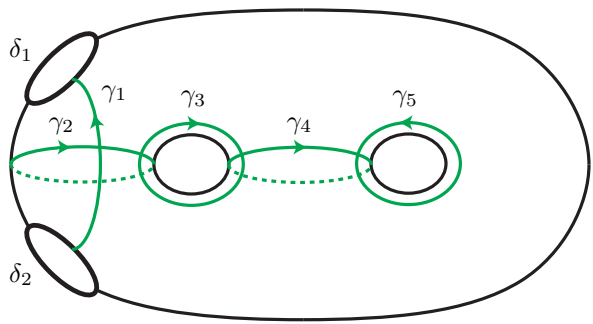}
  \put(-220,100){$\delta_1$} 
  \put(-220,15){$\delta_2$} 
  \put(-185,85){$\gamma_1$}
  \put(-205,73){$\gamma_2$}
  \put(-155,83){$\gamma_3$}
  \put(-115,73){$\gamma_4$}
  \put(-75,83){$\gamma_5$}
  \caption{The curves filling up $\Sigma_2^2$}
  \label{fig:fill-up-curves}
 \end{center}
\end{figure}
\begin{figure}[!htb]
 \begin{center}
  \includegraphics[keepaspectratio, width=364pt, clip]{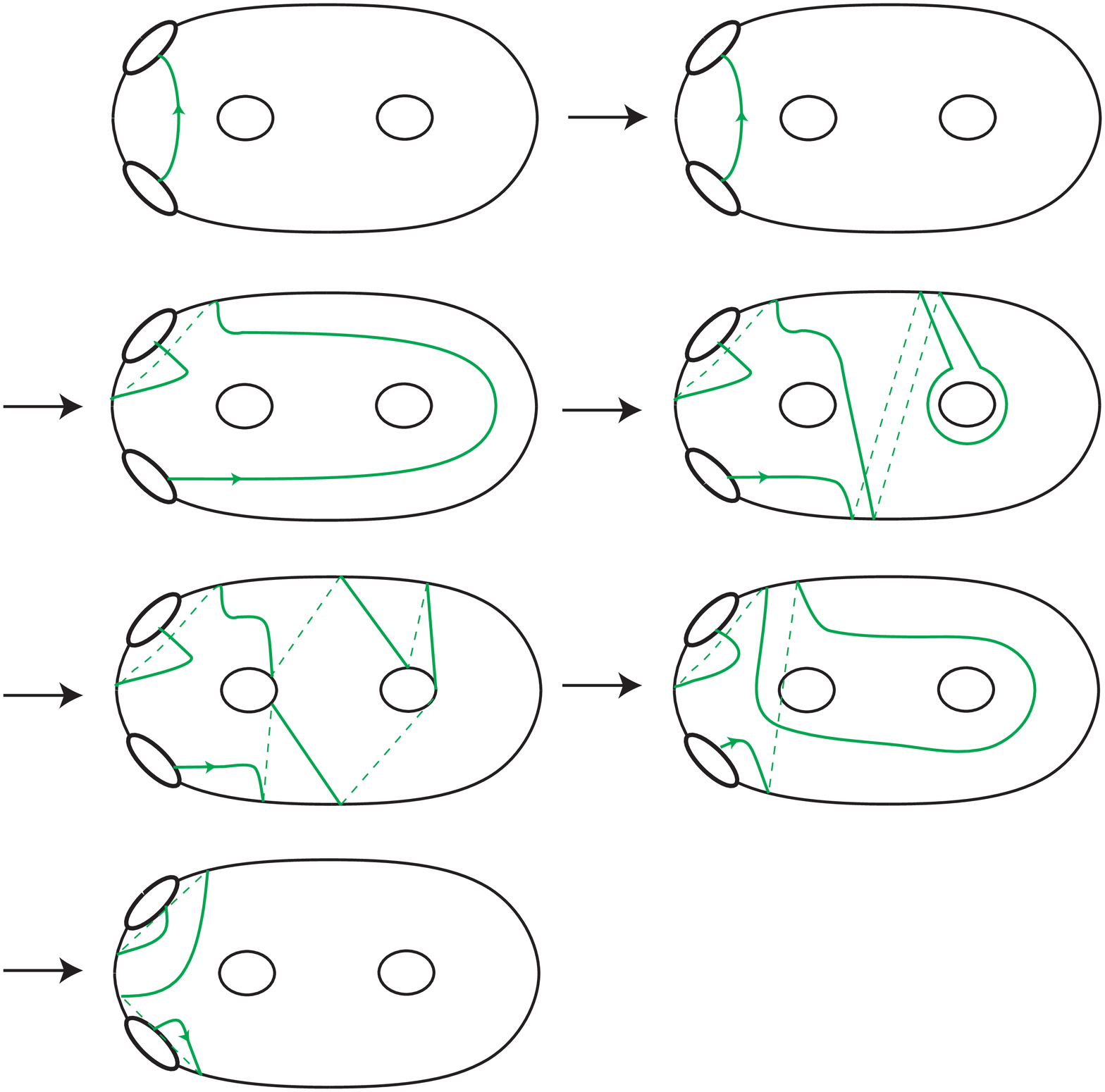}
  \put(-305,330){$\gamma_1$}
  \put(-185,335){$t_{B_1}t_{B_2}t_C$}
  \put(-360,238){$t_{B_0}$}
  \put(-175,238){$t_C$}
  \put(-360,140){$t_{B_2}$}
  \put(-175,142){$t_{B_1}$}
  \put(-360,50){$t_{B_0}$}
  \put(-300,15){$t_{\delta_1}t_{\delta_2}(\gamma_1)$}
  \caption{$t_{\delta_1}t_{\delta_2}(\gamma_1)=(t_{B_0}t_{B_1}t_{B_2}t_C)^2(\gamma_1)$}
  \label{fig:gamma1}
 \end{center}
\end{figure}
\begin{figure}[!htbp]
 \begin{center}
  \includegraphics[keepaspectratio, width=364pt, clip]{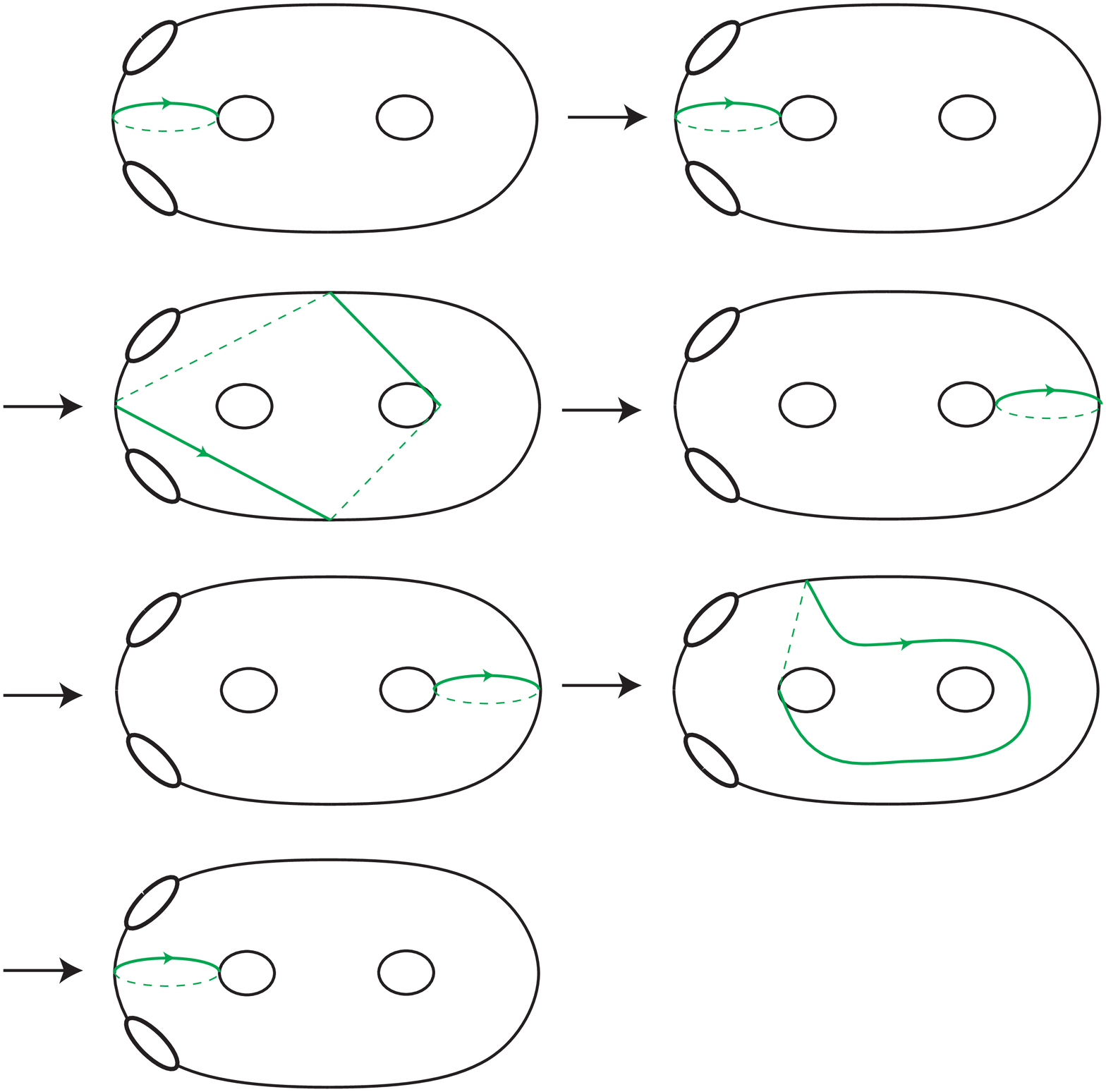}
  \put(-315,335){$\gamma_2$}
  \put(-180,335){$t_{B_2}t_C$}
  \put(-360,238){$t_{B_1}$}
  \put(-175,238){$t_{B_0}$}
  \put(-365,140){$t_{B_2}t_C$}
  \put(-175,142){$t_{B_1}$}
  \put(-360,50){$t_{B_0}$}
  \put(-310,50){$t_{\delta_1}t_{\delta_2}(\gamma_2)$}
  \caption{$t_{\delta_1}t_{\delta_2}(\gamma_2)=(t_{B_0}t_{B_1}t_{B_2}t_C)^2(\gamma_2)$}
  \label{fig:gamma2}
 \end{center}
\end{figure}
\begin{figure}[htbp]
 \begin{center}
  \includegraphics[keepaspectratio, width=364pt, clip]{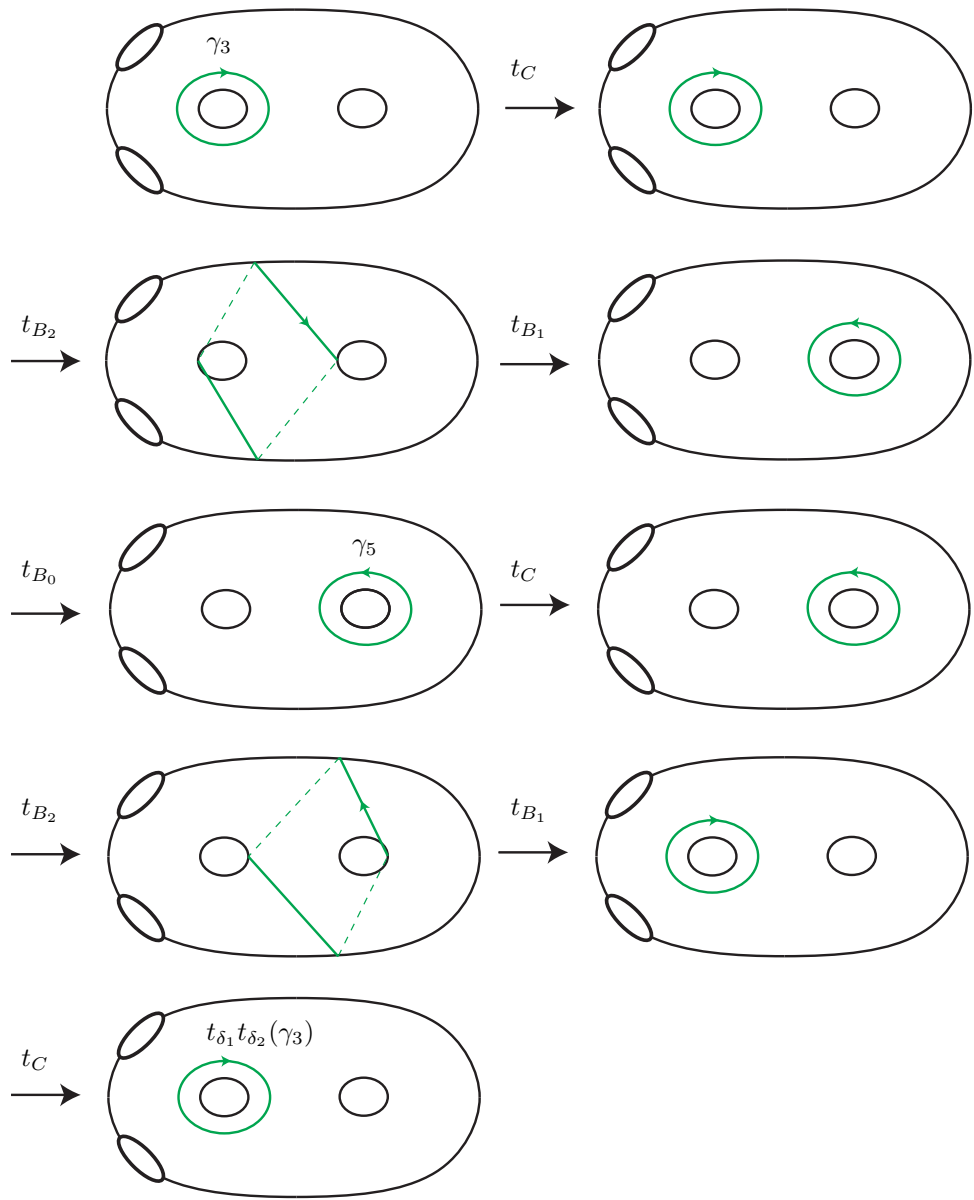}
  \put(-290,435){$\gamma_3$}
  \put(-175,425){$t_C$}
  \put(-360,328){$t_{B_2}$}
  \put(-175,328){$t_{B_1}$}
  \put(-360,235){$t_{B_0}$} 
  \put(-235,245){$\gamma_5$}
  \put(-175,235){$t_C$}
  \put(-360,145){$t_{B_2}$}
  \put(-175,145){$t_{B_1}$}
  \put(-360,50){$t_C$}
  \put(-290,60){$t_{\delta_1}t_{\delta_2}(\gamma_3)$}
  \caption{$t_{\delta_1}t_{\delta_2}(\gamma_3)=(t_{B_0}t_{B_1}t_{B_2}t_C)^2(\gamma_3)$. 
Note that $t_{B_0}t_{B_1}t_{B_2}t_C(\gamma_3)=\gamma_5$ and $t_{B_0}t_{B_1}t_{B_2}t_C(\gamma_5)=\gamma_3$.}
  \label{fig:gamma3}
 \end{center}
\end{figure}
\begin{figure}[htbp]
 \begin{center}
  \includegraphics[keepaspectratio, width=364pt, clip]{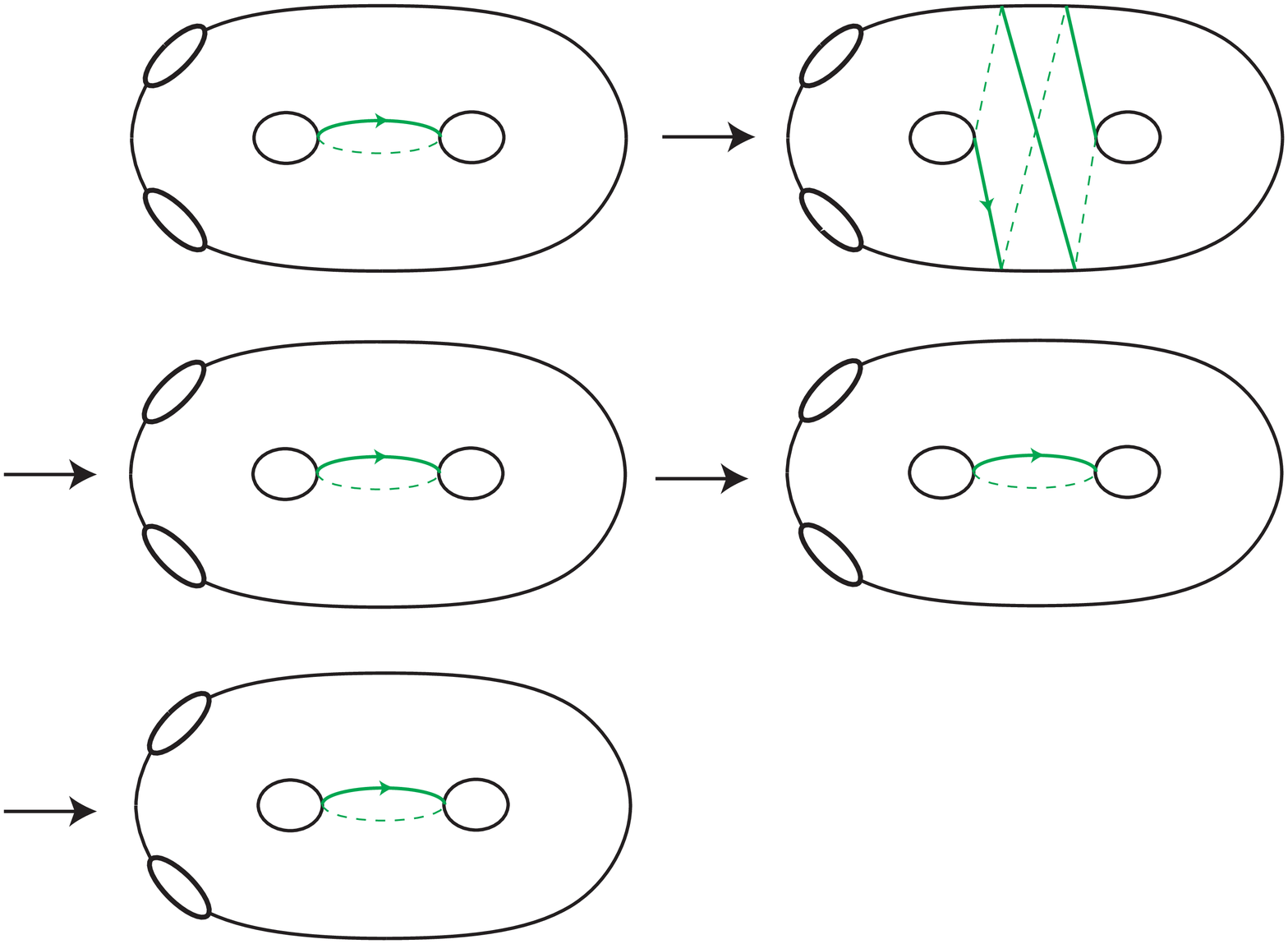}
  \put(-265,240){$\gamma_4$}
  \put(-175,238){$t_C$}
  \put(-360,143){$t_{B_2}$}
  \put(-180,142){$t_{B_0}t_{B_1}$}
  \put(-80,145){$\gamma_4$}
  \put(-385,50){$t_{B_0}t_{B_1}t_{B_2}t_C$}
  \put(-280,50){$t_{\delta_1}t_{\delta_2}(\gamma_4)$}
  \caption{$t_{\delta_1}t_{\delta_2}(\gamma_4)=(t_{B_0}t_{B_1}t_{B_2}t_C)^2(\gamma_4)$}
  \label{fig:gamma4}
 \end{center}
\end{figure}
\begin{figure}[htbp]
 \begin{center}
  \includegraphics[keepaspectratio, width=364pt, clip]{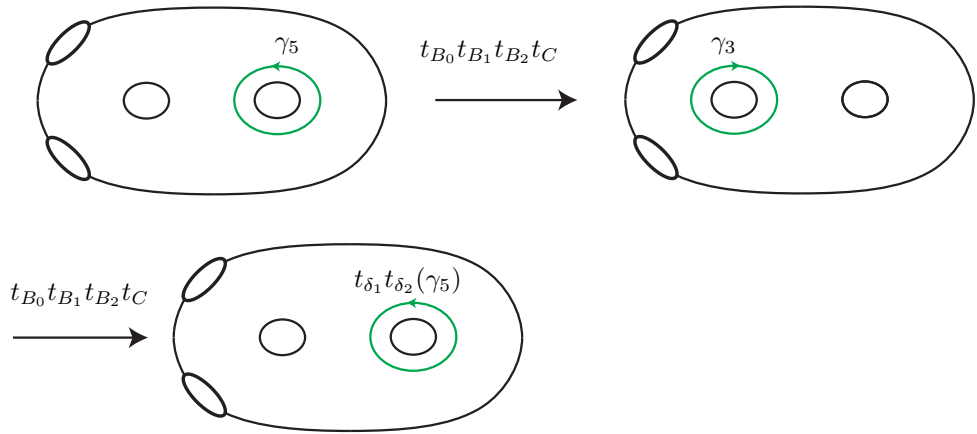}
  \put(-210,142){$t_{B_0}t_{B_1}t_{B_2}t_C$}
  \put(-265,145){$\gamma_5$}
  \put(-100,145){$\gamma_3$}
  \put(-365,50){$t_{B_0}t_{B_1}t_{B_2}t_C$}
  \put(-235,55){$t_{\delta_1}t_{\delta_2}(\gamma_5)$}
  \caption{$t_{\delta_1}t_{\delta_2}(\gamma_5)=(t_{B_0}t_{B_1}t_{B_2}t_C)^2(\gamma_5)$. Compare with Figure~\ref{fig:gamma3}.}
  \label{fig:gamma5}
 \end{center}
\end{figure}

\clearpage
\subsection{$k$-holed torus relations}
\begin{figure}[t]
 \begin{center}
     \includegraphics[keepaspectratio, width=97pt,clip]{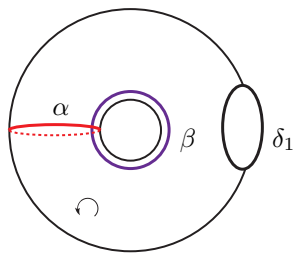}
       \put(3,40){$\delta_1$} 
       \put(-80,52){$\alpha$}
       \put(-32,40){$\beta$}
     \caption{$1$-holed torus}
     \label{fig:1-holedtorusrelation}
 \end{center}
\end{figure}
\begin{figure}[t]
 \begin{center}
     \includegraphics[keepaspectratio, width=97pt,clip]{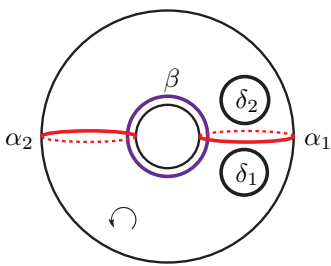}
       \put(-23,59){$\delta_2$} 
       \put(-23,31){$\delta_1$} 
       \put(3,45){$\alpha_1$}
       \put(-110,45){$\alpha_2$}
       \put(-50,67){$\beta$}
     \caption{$2$-holed torus}
     \label{fig:2-holedtorusrelation}
 \end{center}
\end{figure}
Korkmaz and Ozbagci~\cite{KO2} systematically constructed \textit{$k$-holed torus relations}, 
which represent the product of the right hand Dehn twists along the simple closed curves $\delta_i$
parallel to the boundary components of the $k$-holed torus $\Sigma_1^{k}$ 
as the product of the right hand Dehn twists along certain essential simple closed curves $\alpha_j$, 
in the form
$${\delta_1}{\delta_2} \cdots {\delta_k} = {\alpha_1} {\alpha_2} \cdots {\alpha_l} \qquad \text{in} \enskip \mathcal{M}_1^k. $$
For example, they started with the well-known $1$-holed torus relation
$$\delta_1 = (\alpha\beta)^6,$$
where the curves are as depicted in Figure \ref{fig:1-holedtorusrelation}.
By combining the $1$-holed torus relation with the lantern relation, they obtained the $2$-holed torus relation
$$ \delta_1\delta_2 = (\alpha_1\alpha_2\beta)^4,$$
which is also well-known,
where the curves are as depicted in Figure \ref{fig:2-holedtorusrelation}.
They successively constructed the $(k+1)$-holed torus relation by combining the 
$k$-holed torus relation with the \textit{lantern relation} (see, also~\cite{KO2})
until they obtained the $9$-holed torus relation.
However, for some reason, we will use the $7$-holed torus relation
$$ \delta_1\delta_2\delta_3\delta_4\delta_5\delta_6\delta_7 =
  \alpha_3\alpha_4\alpha_1\beta\sigma_5\alpha_2\beta_5\sigma_3\sigma_6
\alpha_6\beta_3\sigma_4, $$
where $\delta_1, \delta_2, \dots, \delta_7, \alpha_1, \alpha_2, \dots, \alpha_7, \beta, \sigma_3, \sigma_4, \sigma_5$ 
and $\sigma_6 $ are as depicted in Figure~\ref{fig:7-holedtorusrelation}, and $\beta_3=\alpha_3\beta\overline{\alpha}_3$ and $\beta_5=\alpha_5\beta\overline{\alpha}_5$. 

\begin{figure}[t]
 \begin{center}
  \includegraphics[keepaspectratio, width=194pt, clip]{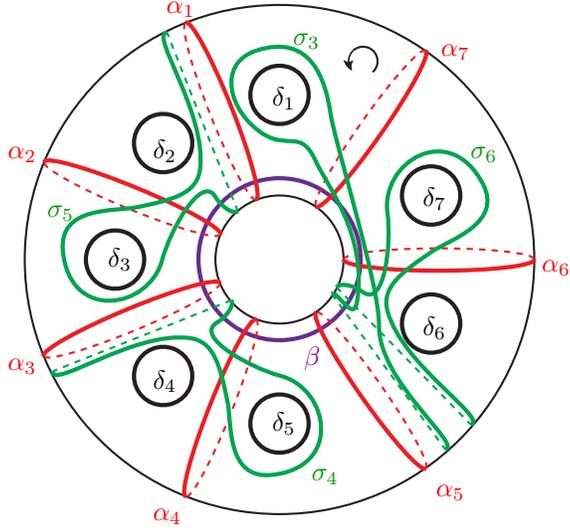}
  \put(-100,155){$\delta_1$} 
  \put(-145,135){$\delta_2$} 
  \put(-162,94){$\delta_3$} 
  \put(-145,48){$\delta_4$} 
  \put(-100,32){$\delta_5$} 
  \put(-43,68){$\delta_6$} 
  \put(-43,117){$\delta_7$}   
  \put(-140,190){\color{red}$\alpha_1$} 
  \put(-200,135){\color{red}$\alpha_2$}  
  \put(-200,55){\color{red}$\alpha_3$} 
  \put(-145,-2){\color{red}$\alpha_4$} 
  \put(-38,7){\color{red}$\alpha_5$} 
  \put(2,92){\color{red}$\alpha_6$} 
  \put(-36,174){\color{red}$\alpha_7$} 
  \put(-88,57){\color[rgb]{0.5,0,0.5}$\beta$}
  \put(-92,178){\color[rgb]{0,0.6,0}$\sigma_3$} 
  \put(-85,13){\color[rgb]{0,0.6,0}$\sigma_4$} 
  \put(-185,113){\color[rgb]{0,0.6,0}$\sigma_5$} 
  \put(-25,135){\color[rgb]{0,0.6,0}$\sigma_6$} 
  \caption{7-holed torus}
  \label{fig:7-holedtorusrelation}
 \end{center}
\end{figure}

\section{Proof of the upper bounds} \label{sec:proof}
Now, we prove Theorem~\ref{mainthm} by constructing Lefschetz fibrations with the claimed number of singular fibers.
Matsumoto's relation will be used for the proof of Theorem \ref{mainthm} $(1)$, and the $7$-holed torus relation will be 
used for $(2)$.
It is convenient to recall the following well-known lemma.

\begin{lem}  \label{lem:relmcg}  Let $\alpha$ and $\beta$ be simple closed curves in $\Sigma_g^k$. \\
\textup{(1)} If $\alpha \cap \beta = \emptyset$,  then $t_\alpha t_\beta = t_\beta t_\alpha$.\\
\textup{(2)} For any $\phi \in \mathcal{M}_g^k$, we have $t_{\phi(\alpha)} = \phi t_\alpha \phi^{-1} $.    \\
In particular, a conjugate of a right hand Dehn twist is also a right hand Dehn twist.
\end{lem}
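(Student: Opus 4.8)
The plan is to deduce both parts directly from the description of a Dehn twist as a diffeomorphism supported in an annular neighborhood of its defining curve, without appealing to the Alexander method.

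For part (1): since $\alpha\cap\beta=\emptyset$, I would first choose closed annular neighborhoods $A_\alpha\supset\alpha$ and $A_\beta\supset\beta$ that are disjoint (possible precisely because the curves do not meet, after shrinking tubular neighborhoods if necessary, and taken away from $\partial\Sigma_g^k$). Representing $t_\alpha$ by a diffeomorphism equal to the identity outside $A_\alpha$ and $t_\beta$ by one equal to the identity outside $A_\beta$, the two diffeomorphisms have disjoint supports, hence commute as diffeomorphisms; passing to isotopy classes gives $t_\alpha t_\beta=t_\beta t_\alpha$ in $\mathcal{M}_g^k$.

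For part (2): I would invoke the naturality of the Dehn twist construction. Fix an orientation-preserving diffeomorphism, still written $\phi$, representing the given class, and fix a representative of $t_\alpha$ supported in an annular neighborhood $A_\alpha$ of $\alpha$. Then $\phi\, t_\alpha\, \phi^{-1}$ is supported in $\phi(A_\alpha)$, which is an annular neighborhood of $\phi(\alpha)$, and conjugation by $\phi$ carries the standard local twisting model on $A_\alpha$ to the standard local twisting model on $\phi(A_\alpha)$. Because $\phi$ preserves orientation, the sense of the twist is preserved, so $\phi\, t_\alpha\, \phi^{-1}$ agrees up to isotopy with the right hand Dehn twist along $\phi(\alpha)$; that is, $t_{\phi(\alpha)}=\phi\, t_\alpha\, \phi^{-1}$ in $\mathcal{M}_g^k$. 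The final assertion is then immediate: for any right hand Dehn twist $t_\alpha$ and any $\phi\in\mathcal{M}_g^k$, the conjugate $\phi\, t_\alpha\, \phi^{-1}$ equals $t_{\phi(\alpha)}$, which is again a right hand Dehn twist (along the simple closed curve $\phi(\alpha)$).

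The one point I would treat with care — the main, if minor, obstacle — is the orientation bookkeeping in part (2): one must verify that identifying $\phi(A_\alpha)$ with a model annulus compatibly with the orientation of $\Sigma_g^k$ conjugates the right hand twist model to the right hand twist model rather than to its inverse, which is exactly where the orientation-preserving hypothesis on $\phi$ enters (an orientation-reversing $\phi$ would instead yield $\phi\, t_\alpha\, \phi^{-1}=t_{\phi(\alpha)}^{-1}$). Every other step is routine manipulation of supports of diffeomorphisms. As both facts are entirely standard, one could alternatively cite a textbook account such as \cite{FM}; the sketch above is included only to keep the argument self-contained.
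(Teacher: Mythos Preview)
Your proof is correct and is the standard argument. The paper itself does not prove this lemma at all: it is stated as a ``well-known lemma'' and left without proof (implicitly referring the reader to a source such as \cite{FM}), so your self-contained sketch actually provides more detail than the paper does.
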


\begin{proof}[Proof of Theorem~\textup{\ref{mainthm} (1)}]
We embed the surface $\Sigma_2^2$ into $\Sigma_{3+k}$ $(k \geq 0)$ as in Figure \ref{fig:embeddedmatsumotosrelation}.
\begin{figure}[t]
 \begin{center}
  \includegraphics[keepaspectratio, width=290pt, clip]{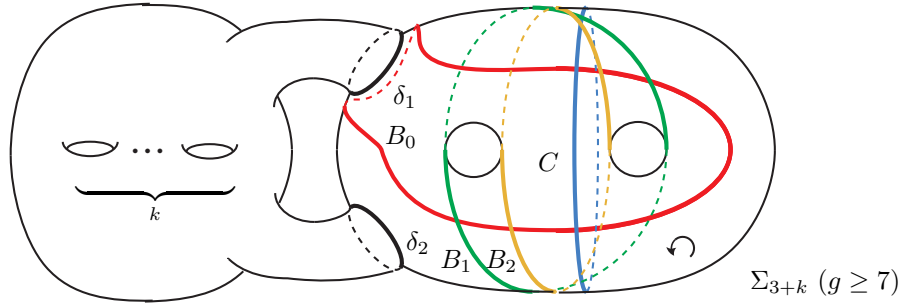}
  \put(-145,75){$\delta_1$} 
  \put(-140,20){$\delta_2$} 
  \put(-265,45){$\underbrace{\hspace{60pt}}_{k}$}
  \put(-148,60){$B_0$} 
  \put(-127,13){$B_1$} 
  \put(-110,13){$B_2$} 
  \put(-90,50){$C$} 
  \put(-10,5){$\Sigma_{3+k}$ ($g\geq7$)}
  \caption{Embedding $\Sigma_2^2$ in $\Sigma_{3+k}$}
  \label{fig:embeddedmatsumotosrelation}
 \end{center}
\end{figure}
Then, by Lemma \ref{lem:matsumotosrelation} we have 
\begin{align*}
\delta_1\delta_2 &= (B_0 B_1 B_2 C)^2 \\
&= B_0 B_1 B_2 C B_0 B_1 B_2 C \\
&= B_1 B_2 (\overline{B}_2 \overline{B}_1 B_0 B_1 B_2) C B_0 B_1 B_2 C \\
&= B_1 B_2 B_0^\prime C B_0 B_1 B_2 C,
\end{align*}
where $B_0^\prime = \overline{B}_2 \overline{B}_1 B_0 B_1 B_2$ is a right hand Dehn twist by 
Lemma~\ref{lem:relmcg}~$(2)$.
Multiplying $\overline{B}_2\overline{B}_1$ to the both sides, we obtain 
\begin{align*}
\overline{B}_2\overline{B}_1\delta_1\delta_2 =B_0^\prime C B_0 B_1 B_2 C. 
\end{align*}
Then the term on the left hand side is a commutator. 
To see this, by using Lemma~\ref{lem:relmcg}~$(1)$ we first rearrange it as
$$ \overline{B}_2\overline{B}_1\delta_1\delta_2
 =  (\overline{B}_2\delta_1)( \overline{B}_1\delta_2). $$
We observe that if one cuts $\Sigma_{3+k}$ along {$B_2$} and {$\delta_1$}, then the resulting surface is still connected. 
If one cuts $\Sigma_{3+k}$ along {$B_1$} and {$\delta_2$}, then the resulting surface is also connected 
(see Figure \ref{fig:cutcurve}). 
\begin{figure}[t]
 \begin{center}
  \includegraphics[keepaspectratio, width=338pt, clip]{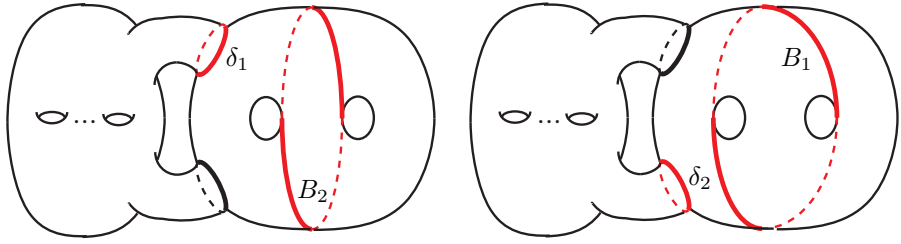}
  \put(-255,65){$\delta_1$} 
  \put(-228,15){$B_2$} 
  \put(-80,20){$\delta_2$} 
  \put(-45,65){$B_1$} 
  \caption{$\Sigma_{3+k} \setminus (B_2 \cup \delta_1)$ and $\Sigma_{3+k} \setminus (B_1 \cup \delta_2)$ are both connected.}
  \label{fig:cutcurve}
 \end{center}
\end{figure}
By the classification of surfaces, this observation implies that there exists an element 
$\phi \in \mathcal{M}_{3+k} $ such that
$\phi({\delta_1}) = {B_1}$ and $\phi({B_2}) = {\delta_2}$. 
Therefore, we have
\begin{align*}
(\overline{B}_2\delta_1)(\overline{B}_1\delta_2)  
&= { \overline{B}_2\delta_1} \overline{\phi({\delta_1})} \phi({B_2})  \\
&= \overline{B}_2\delta_1 \phi \overline{\delta}_1 \phi^{-1}\phi B_2\phi^{-1} \\
&= (\overline{B}_2\delta_1)\phi(\overline{\delta}_1 B_2)\phi^{-1} \\
&= [ \overline{B}_2\delta_1, \phi ] . 
\end{align*}
Consequently, we obtain
$$[ \overline{B}_2\delta_1, \phi ] = \underbrace{B_0^\prime C B_0 B_1 B_2 C}_6. $$
As we have mentioned before, this relation enables us to construct, for all $g = k + 3$ $\geq 3$, 
a genus $g$ Lefschetz fibration over the torus with six singular fibers, i.e., $N(g,1) \leq 6$ for all $g \geq 3$.  \qedhere
\end{proof}

\begin{proof}[Proof of Theorem~\textup{\ref{mainthm}~$(2)$}]
For the $7$-holed torus relation, the same procedure as above works well as follows.
(In fact, the $k$-holed torus relations ($2 \leq k \leq 7$) can be used similarly 
to prove $N(g,1) \leq 12-k$ for all $g \geq k$.)

Embed the $7$-holed torus $\Sigma_1^7$ into $\Sigma_{7+k}$ $(k \geq 0)$ as in Figure \ref{fig:embedded7holedtorusinsigma_g}.
\begin{figure}[t]
 \begin{center}
  \includegraphics[keepaspectratio, width=211pt, clip]{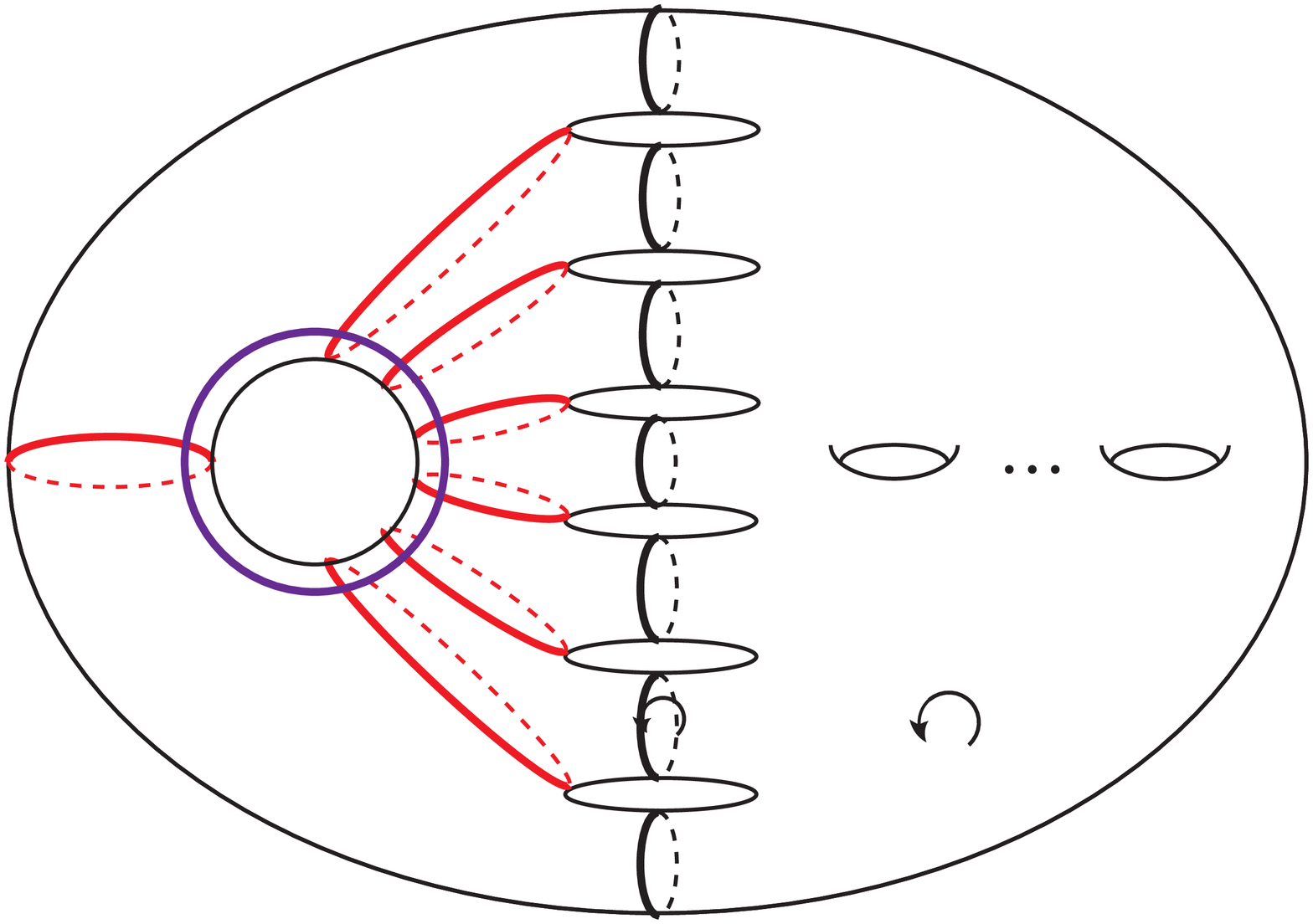}
  \put(-100,135){$\delta_1$} 
  \put(-100,115){$\delta_2$} 
  \put(-100,93){$\delta_3$} 
  \put(-100,72){$\delta_4$}  
  \put(-100,52){$\delta_5$}  
  \put(-100,30){$\delta_6$}  
  \put(-100,7){$\delta_7$}  
  \put(-130,132){$\alpha_1$}  
  \put(-130,110){$\alpha_2$}  
  \put(-130,88){$\alpha_3$}  
  \put(-130,56){$\alpha_4$}  
  \put(-130,36){$\alpha_5$}
  \put(-130,15){$\alpha_6$}  
  \put(-200,85){$\alpha_7$}
  \put(-170,100){$\beta$}      
  \put(-10,15){$\Sigma_{7+k} ~ (k \geq 0)$}
  \put(-75,65){$\underbrace{\hspace{60pt}}_{k}$}
  \caption{Embedding $\Sigma_1^7$ in $\Sigma_{7+k}$}
  \label{fig:embedded7holedtorusinsigma_g}
 \end{center}
\end{figure}
Then we rewrite the $7$-holed torus relation as 
\begin{align*}
\delta_1\delta_2\delta_3\delta_4\delta_5\delta_6\delta_7 
&= \alpha_3\alpha_4\alpha_1\beta\sigma_5\alpha_2\beta_5\sigma_3\sigma_6\alpha_6\beta_3\sigma_4 \\
&= \alpha_3\alpha_4\alpha_1\sigma_5\alpha_2\sigma_6\alpha_6
(\overline{\alpha}_6\overline{\sigma}_6\overline{\alpha}_2\overline{\sigma}_5)\beta
(\sigma_5\alpha_2\sigma_6\alpha_6) \\
& ~~~~~~~~~~~~~~
(\overline{\alpha}_6\overline{\sigma}_6)\beta_5(\sigma_6\alpha_6)
(\overline{\alpha}_6\overline{\sigma}_6)\sigma_3(\sigma_6\alpha_6)
\beta_3\sigma_4 \\
&= \alpha_3\alpha_4\alpha_1\sigma_5\alpha_2\sigma_6\alpha_6 
\beta^\prime \beta_5^\prime \sigma_3^\prime \beta_3\sigma_4,
\end{align*}
where $\beta^\prime = (\overline{\alpha}_6\overline{\sigma}_6\overline{\alpha}_2\overline{\sigma}_5)\beta
(\sigma_5\alpha_2\sigma_6\alpha_6)$, 
$\beta_5^\prime = (\overline{\alpha}_6\overline{\sigma}_6)\beta_5(\sigma_6\alpha_6)$ and
$\sigma_3^\prime = (\overline{\alpha}_6\overline{\sigma}_6)\sigma_3$
$(\sigma_6\alpha_6)$
are right hand Dehn twists.
Multiplying the inverse of $\alpha_3\alpha_4\alpha_1\sigma_5\alpha_2\sigma_6\alpha_6$ 
to the both sides, and rearranging the left hand side 
by using Lemma~\ref{lem:relmcg}~$(1)$, we obtain
$$  ( \overline{\alpha}_6\delta_1 \overline{\alpha}_1\delta_4 \overline{\alpha}_2\delta_5\overline{\alpha}_3)
( \delta_2\overline{\sigma}_5\delta_3\overline{\sigma}_6\delta_6\overline{\alpha}_4\delta_7 )
=\beta^{\prime}\beta_5^{\prime}\sigma_3^{\prime}\beta_3\sigma_4. $$
We observe that if one cuts $\Sigma_{7+k}$ along {$\alpha_6$}, {$\delta_1$}, {$\alpha_1$}, {$\delta_4$}, {$\alpha_2$}, {$\delta_5$} and {$\alpha_3$}, then the resulting surface is still connected. 
If one cuts $\Sigma_{7+k}$ along {$\delta_2$}, {$\sigma_5$}, {$\delta_3$}, {$\sigma_6$}, {$\delta_6$}, {$\alpha_4$} and {$\delta_7$}, then the resulting surface is also connected (see Figure \ref{fig:cutcurveof7holedtorus}). 
\begin{figure}[t]
 \begin{center}
  \includegraphics[keepaspectratio, width=337pt, clip]{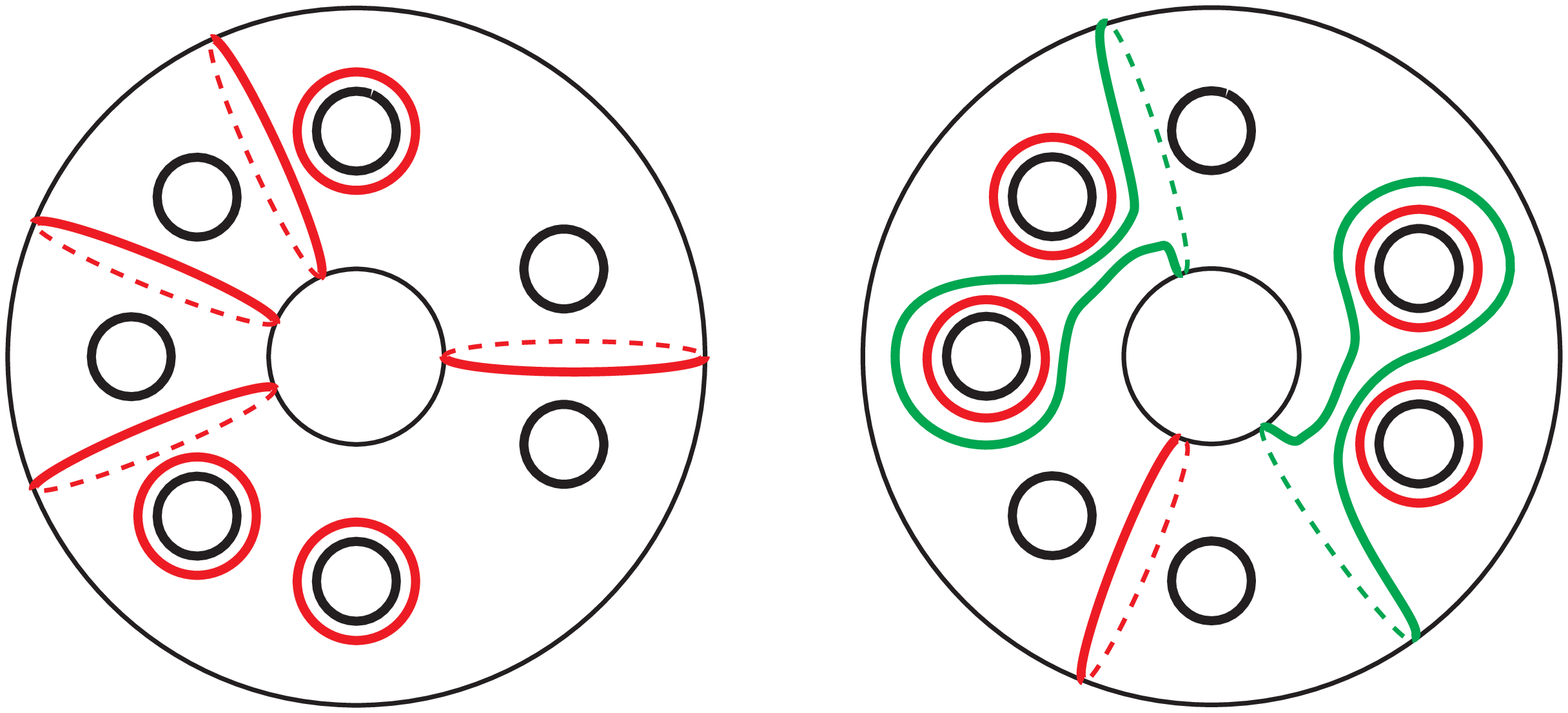}
  \put(-265,120){$\delta_1$} 
  \put(-300,107){$\delta_2$} 
  \put(-315,73){$\delta_3$} 
  \put(-300,40){$\delta_4$} 
  \put(-265,25){$\delta_5$} 
  \put(-220,53){$\delta_6$} 
  \put(-220,90){$\delta_7$} 
  \put(-300,150){$\alpha_1$}
  \put(-345,105){$\alpha_2$}
  \put(-343,40){$\alpha_3$}
  \put(-183,73){$\alpha_6$}
  \put(-80,120){$\delta_1$} 
  \put(-115,107){$\delta_2$} 
  \put(-130,73){$\delta_3$} 
  \put(-115,40){$\delta_4$} 
  \put(-80,25){$\delta_5$} 
  \put(-35,53){$\delta_6$} 
  \put(-35,90){$\delta_7$} 
  \put(-115,0){$\alpha_4$}
  \put(-135,50){$\sigma_5$}
  \put(-30,118){$\sigma_6$}
  \caption{$\Sigma_{7+k} \setminus (\alpha_6 \cup \delta_1 \cup \alpha_1 \cup \delta_4 \cup \alpha_2 \cup \delta_5 \cup \alpha_3)$ and 
$\Sigma_{7+k} \setminus (\delta_2 \cup \sigma_5 \cup \delta_3 \cup \sigma_6 \cup \delta_6 \cup \alpha_4 \cup \delta_7)$ are both connected. Note that the boundary components of $\Sigma_1^7$ are connected in $\Sigma_{7+k}$.}
  \label{fig:cutcurveof7holedtorus}
 \end{center}
\end{figure}
Therefore there exists an element $\phi \in \mathcal{M}_{7+k} $ such that 
$\phi({\alpha_3}) = {\delta_2}$, $\phi({\delta_5}) = {\sigma_5}$, $\phi({\alpha_2}) = {\delta_3}$, $\phi({\delta_4}) = {\sigma_6}$, $\phi({\alpha_1}) = {\delta_6}$, $\phi({\delta_1}) = {\alpha_4}$ and $\phi({\alpha_6}) = {\delta_7}$. 
Then we have
\begin{align*}
 & ( \overline{\alpha}_6\delta_1 \overline{\alpha}_1\delta_4 \overline{\alpha}_2\delta_5\overline{\alpha}_3) 
( \delta_2\overline{\sigma}_5\delta_3\overline{\sigma}_6\delta_6\overline{\alpha}_4\delta_7 ) \\
&= { \overline{\alpha}_6\delta_1 \overline{\alpha}_1\delta_4 \overline{\alpha}_2\delta_5\overline{\alpha}_3} 
\phi({\alpha_3}) \overline{\phi({\delta_5})}
\phi({\alpha_2}) \overline{\phi({\delta_4})}
\phi({\alpha_1}) \overline{\phi({\delta_1})}
\phi({\alpha_6}) \\
&=\overline{\alpha}_6\delta_1 \overline{\alpha}_1\delta_4 \overline{\alpha}_2\delta_5\overline{\alpha}_3 
\phi \alpha_3 \phi^{-1}
\phi \overline{\delta}_5 \phi^{-1}
\phi \alpha_2 \phi^{-1}
\phi \overline{\delta}_4 \phi^{-1}
\phi \alpha_1 \phi^{-1}
\phi \overline{\delta}_1 \phi^{-1}
\phi \alpha_6 \phi^{-1}
\\
&= (\overline{\alpha}_6\delta_1 \overline{\alpha}_1\delta_4 \overline{\alpha}_2\delta_5\overline{\alpha}_3)\phi ( \alpha_3\overline{\delta}_5\alpha_2\overline{\delta}_4\alpha_1\overline{\delta}_1\alpha_6)\phi^{-1} \\
&= [\overline{\alpha}_6\delta_1 \overline{\alpha}_1\delta_4 \overline{\alpha}_2\delta_5\overline{\alpha}_3, \phi ] . 
\end{align*}
Consequently, we obtain
$$[\overline{\alpha}_6\delta_1 \overline{\alpha}_1\delta_4 \overline{\alpha}_2\delta_5\overline{\alpha}_3, \phi ] = \underbrace{\beta^{\prime}\beta_5^{\prime}\sigma_3^{\prime}\beta_3\sigma_4}_{5}. $$
This relation implies the existence of a genus $g$ Lefschetz fibration over the torus with five singular fibers, 
for all $g \geq 7$, i.e., {$N(g,1) \leq 5$ for all $g \geq 7$}. \qedhere
\end{proof}

\begin{remark} 
$(1)$ If we could apply the same procedure as above to the $8$-holed torus relation or the $9$-holed torus relation,
then we would obtain $N(g,1) \leq 4$ for all $g \geq 8$ or $N(g,1) \leq 3$ for all $g \geq 9$, respectively.
However, it is impossible to do that, 
since the simple closed curves appearing in these relations are so complicated that we cannot choose 
two disjoint collections of these curves such that the surface obtained by cutting along 
each collection of curves is connected.\\
$(2)$ There is a possibility that the monodromy of Xiao's fibration~\cite{Xiao} can be used to prove $N(g,1) \leq 5$ for all $g \geq 3$.
As we mentioned in Section~\ref{sec:intro}, Xiao has discovered a genus $2$ Lefschetz fibration
over the sphere with seven singular fibers.
Although the exact monodromy of Xiao's fibration has not been known yet, 
by the existence of such a Lefschetz fibration we know that 
there exists a relation in $\mathcal{M}_2$ such as 
$$ c_1c_2c_3c_4c_5c_6c_7 = 1,$$
where the curves $c_i$ are essential simple closed curves in $\Sigma_2$.
Moreover, from the information of the abelianization of $\mathcal{M}_2$, 
we can deduce that four of $c_i$ are non-separating and the other three are separating.
In addition, let us assume that the relation can be lifted in $\mathcal{M}_2^2$ in a form
$$ \tilde{c}_1\tilde{c}_2\tilde{c}_3\tilde{c}_4\tilde{c}_5\tilde{c}_6\tilde{c}_7 = \delta_1\delta_2,$$
where the curves $\tilde{c}_i$ are simple closed curves in $\Sigma_2^2$ such that 
$P(\tilde{c}_i) = c_i$ for the natural homomorphism $P : \mathcal{M}_2^2 \rightarrow \mathcal{M}_2$, 
and $\delta_1$ and $\delta_2$ are the simple closed curves in $\Sigma_2^2$ 
parallel to the boundary components.
Then our technique used to prove Theorem~\ref{mainthm} can be applied to the above relation
by embedding $\Sigma_2^2$ into $\Sigma_{3+k}$ ($k\geq0$).
This argument would imply $N(g,1) \leq 5$ for all $g \geq 3$.
Note that the assumption of the existence of $\{\tilde{c}_i\}$ is equivalent to
the existence of a genus $2$ Lefschetz fibration over the sphere with seven singular fibers 
and two disjoint $(-1)$-sections (cf.~\cite{KO2}). 
\end{remark}

\end{document}